\newcommand{\unif}{{\sf Unif}}
\newcommand{\betadist}{{\sf Beta}}
\renewcommand\bibsection%
\begin{document}
\title{Fusing independent inferential models in a black-box manner}
\titlerunning{Fusing independent IMs}
%
\author{Leonardo Cella}
\authorrunning{L. Cella}
%
\institute{Wake Forest University \\ Department of Statistical Sciences \\ Winston-Salem, North Carolina, 27109, U.S.A \\
\email{cellal@wfu.edu}\\
}
\maketitle              
\begin{abstract}
Inferential models (IMs) represent a novel possibilistic approach for achieving provably valid statistical inference.
This paper introduces a general framework for fusing independent IMs in a ``black-box'' manner, requiring no knowledge of the original IMs construction details. The underlying logic of this framework mirrors that of the IMs approach. First, a fusing function for the initial IMs' possibility contours is selected. Given the possible lack of guarantee regarding the calibration of this function for valid inferences, a ``validification'' step is performed. Subsequently, a straightforward normalization step is executed to ensure that the final output conforms to a possibility contour.

\keywords{Inferential models  \and possibility measures \and combination \and p-values \and validity \and black-box.}
\end{abstract}
\section{Introduction}
The art of combining different sources of evidence to distinguish signal from noise remains highly relevant in today's world, where data is more abundant than ever and manifests in various forms. Take for example independent studies that were conducted to answer a common question. Each study may yield a measure of evidence for the quantity of interest. The objective is to integrate these specific measures of evidence into a composite measure that offers greater insight. Note that, in these scenarios, ``data'' refers to the measures of evidence themselves.


Along these lines, the present paper is primarily focused on exploring new developments in the combination/fusion of independent {\em inferential models} (IMs) \citep{martinbook}. IMs constitute a contemporary statistical inference framework that outputs provably valid possibility and necessity measures to any claim of interest concerning the inferential target. 
The possibilistic nature of IMs entails the existence of a possibility contour that is the base for all IMs uncertainty quantification. In what follows, the possibility contours from independent IMs will constitute the ``data'' to be combined.

Specifically, the goal is to fuse the independent IMs' possibility contours in a way that the resulting function maintains the calibrated possibilistic characteristic of an IM while exhibiting favorable efficiency properties. Moreover, this fusion should not require prior knowledge of how the original contours were constructed. 

There exists a vast literature addressing the fusion of possibility contours. An extensive overview can be found in \citet{Dubois1999}, but see also \citet{DuboisPrade88,dubois:hal-01484952,Dubois2001}. However, these fusing rules typically do not take calibration assurances into consideration.
Given that validity is the primary pillar of IMs, an alternative fusion strategy is needed.

After providing a brief background on the IMs approach in Section~\ref{ss:Back}, I outline the proposed methodology in Section~\ref{S:main}. I begin by examining an intuitive fusion alternative in Section~\ref{ss:First}, namely taking the minimum of the IMs' possibility contours. However, this solution turns out to be unsatisfactory for two reasons: it lacks calibration and does not conform to a possibility contour.  In Section~\ref{ss:ValNorm}, I propose remedial measures to address these shortcomings. These remedial measures are not specific to the initial solution. In fact, they align well with the overall logic of the IMs approach. Consequently, in Section~\ref{ss:general}, I propose a general solution to the fusion of independent IMs. It is noteworthy that this general solution not only shares connections with the fusion methods in possibility theory mentioned above, but also exhibits similarities to methods proposed for merging independent p-values in the statistics literature \citep[e.g.,][]{OosterhoffPvalues,OwenPvalue,cousins2008annotated}.
A concise summary and some important remarks are provided in Section~\ref{S:Conclusion}.


\section{Background on IMs}
\label{ss:Back}
Let $Y^n = (Y_1, \ldots, Y_n)$ represent $n$ independent random draws from a distribution $\mathbb{P}_\theta$ that is associated with some unknown quantity of interest $\theta \in \Theta$. This setup is supposed to be general, encompassing scenarios where $\mathbb{P}_\theta$ conforms to a parametric model, as well as instances where the distribution of $Y^n$ remains unspecified, with $\theta$ serving to characterize a feature of this distribution. The goal is to perform probabilistic inferences on $\theta$ in the sense that, after observing some data $Y^n=y^n$, degrees of belief  can be assigned to claims of interest about $\theta$.

IMs constitute a relatively novel framework for probabilistic inference, employing necessity measures to quantify degrees of belief. What sets IMs apart from other probabilistic methodologies, whether grounded in precise or imprecise probabilities, is the verifiable calibration assurance of its degrees of belief. 
Specifically, if $\mathcal{N}_{y^n}(A)$ represents the IMs' necessity measure attributed to the claim ``$\theta \in A$'' after observing data $Y^n=y^n$, the following {\em validity property}
\begin{equation}\label{eq:NeceVal}
\sup_{\theta \notin A}\mathbb{P}_\theta^n \left\{ \mathcal{N}_{Y^n}(A) \geq 1-\alpha \right \} \leq \alpha, \quad  \text{for all $\alpha \in [0,1]$ and all $A \subset \Theta$}
\end{equation}
is satisfied. In words, \eqref{eq:NeceVal} states that the assignment of high degree of belief to false claims about $\theta$ is a rare event with respect to $Y^n \sim \mathbb{P}_\theta^n$. 


The possibilistic nature of IMs entails the existence of a {\em possibility contour}, a function $\pi_{y^n}(\vartheta)$ on $(\mathbb{Y}^n \times \Theta) \rightarrow [0,1]$ satisfying 
\begin{equation}\label{eq:consonance}
 \sup_{\vartheta \in \Theta} \pi_{y^n}(\vartheta)=1 \quad \text{for all } y^n.   
\end{equation}
This possibility contour serves as the foundation for computing necessity measures for claims of interest within IMs. Moreover, a necessary and sufficient condition for the IMs validity in \eqref{eq:NeceVal} is that the possibility contour is stochastically no smaller than a $\unif(0,1)$ distribution. This can be stated as
\begin{equation}\label{eq:IMvalidity}
\mathbb{P}_\theta^n\{ \pi_{Y^n}(\theta) \leq \alpha\} \leq \alpha, \quad  \text{for all $\alpha \in [0,1]$.}
\end{equation}
An IM is termed {\em exact} valid when $\pi_{Y^n}(\theta) \sim \unif(0,1)$. 

The ``stochastically no smaller than uniform'' outlined in \eqref{eq:IMvalidity} is a well known characteristic of p-values. However, it is important to emphasize that the IMs' possibility contours, besides satisfying such criteria, also satisfy \eqref{eq:consonance}, a characteristic that p-values don't always satisfy. As studied in detail in \citet{imprecisefrequentist}, the reason to constrain IMs to possibility measures is twofold. First, for any probabilistic approach with valid degrees of belief in the sense of \eqref{eq:NeceVal}, there exists a possibilistic IM whose necessity measure is no less efficient. Secondly, possibility and necessity measures stand out among the most computationally straightforward forms of imprecise probabilities.



But how are IMs constructed? Here I'll focus on a modern construction presented in \citet{ryanpp1,ryanpp2}. Suppose one has access to a measurable function $\gamma_{y^n}(\vartheta)$ on $(\mathbb{Y}^n \times \Theta) \rightarrow [0,1]$  that orders candidate values for $\theta$ in a ``plausibilistic'' manner given the observed data $Y^n=y^n$. That is, this function distinguishes between candidate values for $\theta$ that are more or less plausible given $y^n$, where values closer to zero and one indicate disagreement and agreement, respectively. It is usually the case that the chosen $\gamma_{y^n}$ satisfies $\sup_{\vartheta \in \Theta} \gamma_{y^n}(\vartheta)=1$ for all $y^n$, as seen in the natural choice of the likelihood ratio for $\gamma_{y^n}$ when $\mathbb{P}_\theta$ is a parametric model. The properties of a possibility contour are satisfied for such choices, suggesting that the IM construction is complete and enabling the computation of necessity measures for claims of interest concerning $\theta$ to proceed. However, there is no guarantee that $\gamma_{y^n}$ satisfies the validity criteria in \eqref{eq:IMvalidity} 
so it undergoes a {\em validification} step:
\begin{equation}\label{eq:contourIMpar}
\pi_{y^n}(\vartheta) = \mathbb{P}_\vartheta^n\{\gamma_{Y^n}(\vartheta) \leq \gamma_{y^n}(\vartheta)\}, \quad \vartheta \in \Theta,
\end{equation}
giving rise to a new possibility contour that is guaranteed to satisfy \eqref{eq:IMvalidity}. To see this, note that $\pi_{y^n} = G(\gamma_{y^n})$, where $G$ is the distribution function of $\gamma_{Y^n}$, so $\pi_{Y^n}(\theta) = G(\gamma_{Y^n}(\theta)) \sim \unif(0,1)$.

\section{Fusing IMs}
\label{S:main}
\subsection{Setup, objectives and a first idea}
\label{ss:First}

Let $\pi_1, \ldots, \pi_k$ represent $k$ independent IMs' possibility contours, each constructed to make valid probabilistic inferences on an unknown quantity $\theta$. While each IM may be based on a different sample size, I will assume here that this information is not available, so the notation for the IMs' contours suppresses sample sizes for the remainder of the paper. It will be clear below that the sample sizes are not even necessary for the proposed methodology, but see Section~\ref{S:Conclusion}. The goal is to fuse the $k$ possibility contours in a way that 
\begin{enumerate}
    \item the resulting function is also a valid possibility contour;
    \item favorable efficiency properties are exhibited;
    \item no knowledge of how each contour was constructed is required.    
\end{enumerate}

The first requirement is straightforward, meaning that the solution must satisfy properties \eqref{eq:consonance} and \eqref{eq:IMvalidity}, so that it can be used later on to make valid probabilistic inferences about $\theta$. The second requirement is intuitively reasonable. Since we are combining $k$ independent sources of information about $\theta$ we ideally want the solution to show efficiency gains. 
The third requirement means that the fusion of IMs will be done in a black-box manner. 

I start by considering a very intuitive approach to fusing independent IMs. With a focus on ensuring the efficiency of the solution, it involves taking the minimum' of the $k$ contours:
\[\gamma_{min}(\vartheta) = \min\{\pi_1(\vartheta), \ldots, \pi_k(\vartheta)\}, \quad \vartheta \in \Theta. \]
While $\gamma_{min}$ won't exceed any of the $k$ original contours in size, its validity is uncertain. Moreover, there's no guarantee that $\gamma_{min}$ satisfies \eqref{eq:consonance}, meaning that $\gamma_{min}$ may not necessarily qualify as a possibility contour. This can be seen Figure~\ref{fig:firstideas}(b), where $\gamma_{min}$ is displayed for the contours depicted in Figure~\ref{fig:firstideas}(a).


\begin{figure}[t]
\begin{center}
\subfigure[IMs' contours to be fused]{\scalebox{0.3}{\includegraphics{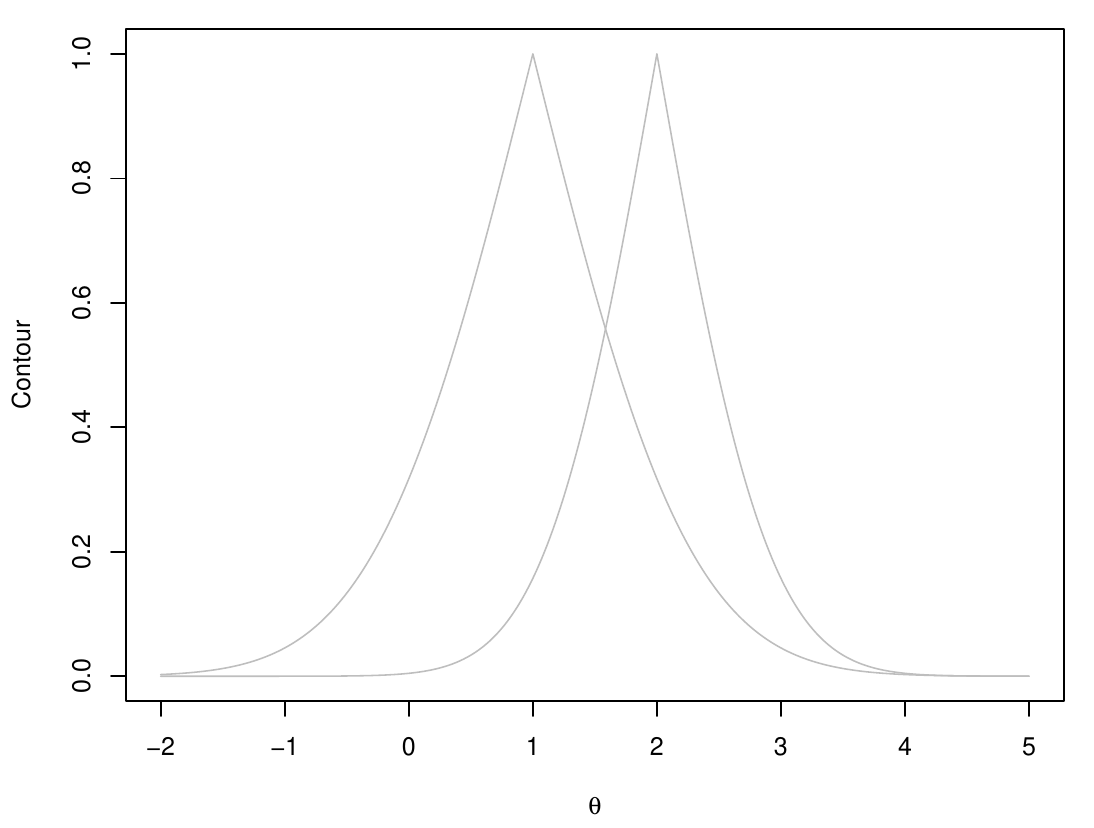}}}
\subfigure[$\gamma_{min}$, $\pi_{min}$ and $\dot{\pi}_{min}$]{\scalebox{0.3}{\includegraphics{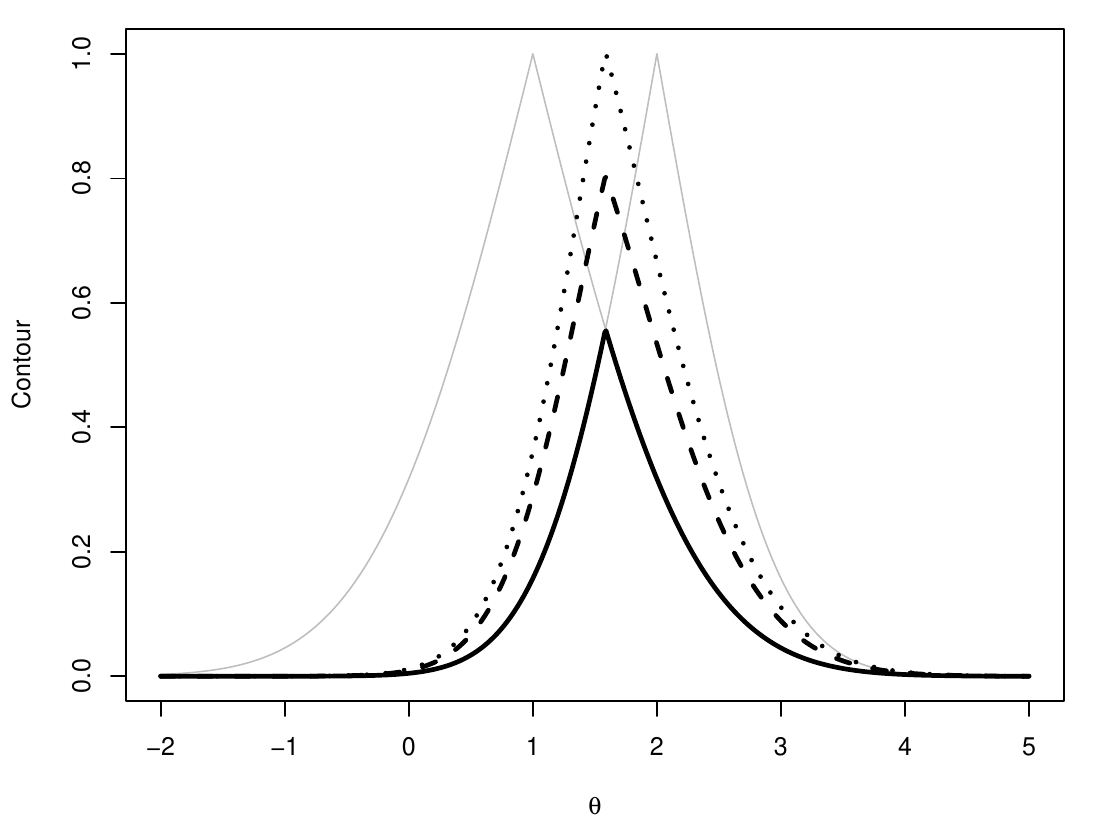}}}
\end{center}
\caption{Details from the example in Sections~\ref{ss:First} and \ref{ss:ValNorm}. The solid, dashed and dotted black lines in Panel~(b) 
represent $\gamma$, $\pi$ and $\dot{\pi}$, respectively.}
\label{fig:firstideas}
\end{figure}

\subsection{Validification and normalization}
\label{ss:ValNorm}
Considering $\gamma_{\text{min}}$ by itself is not an entirely satisfactory solution for fusing independent IMs. However, I will argue here that there is still potential in this solutions. More specifically, there is an opportunity to refine and make it desirable. And the means to do so align well with the general logic of the IMs framework, presented in Section~\ref{ss:Back}. For the rest of this section, $\pi_i, i=1, \ldots k,$ and $\gamma_{min}$ will be denoted by $\Pi_i$ and $\Gamma_{min}$, respectively, when representing random variables. 
Moreover, I will assume $\pi_1, \ldots, \pi_k$ are contours of exact valid IMs, so $\Pi_1(\theta), \ldots$, $\Pi_k(\theta)$ are independent and identically distributed (iid) $\unif(0,1)$.

The issues with $\gamma_{min}$ were twofold: it isn't guaranteed to satisfy the validity property outlined in \eqref{eq:IMvalidity}, and it doesn't necessarily qualify as a possibility contour. For the lack of validity, remember from Section~\ref{ss:Back} that, in the IMs construction, the selected function to plausibilistically rank candidate values of the unknown parameter given observed data might lack calibration, despite its intuitive appeal. This is precisely why the validification step in \eqref{eq:contourIMpar} is performed. Here, the concept remains the same, and $\gamma_{\min}$ will undergo validification.

Towards this, recall the well known result in probability theory that if $V$ is a random variable defined as the minimum of $k$ independent $\unif(0,1)$ random variables, then $V \sim \betadist(1,k)$, a distribution that is stochastically smaller than $\unif(0,1)$ for every $k$. This confirms the lack of validity of $\gamma_{min}$ since $\Gamma_{min}(\theta) \sim \betadist(1,k)$. 
However, by following the structure of \eqref{eq:contourIMpar}, $\gamma_{min}$ can be validified:
\[\pi_{min}(\vartheta) = F(\gamma_{min}(\vartheta)), \quad \vartheta \in \Theta,\]
where $F$ is the distribution function of $\betadist(1,k)$. Validity of $\pi_{min}$ comes from the fact that $F(\Gamma_{min}(\theta))\sim \unif(0,1)$.


Unfortunately, validification alone doesn't automatically convert $\gamma_{min}$ into a possibility contour. This is depicted in Figure~\ref{fig:firstideas}(b), where both $\pi_{min}$ and $\gamma_{min}$ for the contours in Figure~\ref{fig:firstideas}(a) are displayed. But this issue can be resolved with ease by normalizing $\pi_{min}$:
\[\dot{\pi}_{min}(\vartheta) = \frac{\pi_{min}(\vartheta)}{\max_{t \in \Theta} \pi_{min}(t)}, \quad \vartheta \in \Theta.\]    
Note that this normalization step does not compromise the validity of $\pi_{min}$ since $\dot{\pi}_{min}$ is an inflated version of it. This is illustrated in Figure~\ref{fig:firstideas}(b).  

\vspace{2mm}


\subsection{A general solution}
\label{ss:general}
The solution proposed above, in terms of minimum of the $k$ IMs possibility contours, while intuitive, isn't the exclusive option available. In fact, any continuous function from $[0,1]^k$ to $\mathbb{R}$ that is monotonic in each coordinate can be used to merge the $k$ contours. This chosen function can then be: i) validified, by leveraging the fact that the $k$ contours are iid  $\unif(0,1)$; and ii) normalized, if the validified function doesn't reach the value of one. The outcome is a fused IMs possibility contour that is provably valid. I formally present this general solution next, in the form of a theorem.

\begin{theorem}
Assume $\pi_1, \ldots, \pi_k$ are $k$ independent and exact valid IMs possibility contours for a common unknown quantity of interest $\theta$. Let $\gamma_c$ be any continuous function from $[0,1]^k$ to $\mathbb{R}$ that is monotonic in each coordinate and $F$ be the cumulative distribution function of $\gamma_c(U^k)$ for $U^k= (U_1, \ldots, U_k)$ iid $\unif(0,1)$. Then 
\begin{equation}\label{eq:normGeneral}
\dot{\pi}_c(\vartheta) = \frac{\pi_c(\vartheta)}{\max_{t \in \Theta} \pi_c(t)} \quad \vartheta \in \Theta, 
\end{equation}
where
\[\pi_c(\vartheta) = F\left(\gamma_c(\pi_1(\vartheta), \ldots, \pi_k(\vartheta))\right), \] 
is a valid fused IMs' possibility contour in the sense that
\begin{equation}\label{eq:valdotpi}
\mathbb{P}_{U^k}(\dot{\Pi}_c(\theta) \leq \alpha)\leq \alpha, \quad  \text{for all $\alpha \in [0,1]$}.    
\end{equation}
\end{theorem}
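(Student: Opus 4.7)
The plan is a two-step argument that mirrors the construction of $\dot{\pi}_c$: first establish that the intermediate contour $\pi_c$ is validly calibrated via the probability integral transform, then verify that the normalization in \eqref{eq:normGeneral} preserves validity.

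For the first step, the key observation is that because each $\pi_i$ corresponds to an exact valid IM and the $k$ IMs are independent, $\Pi_1(\theta), \ldots, \Pi_k(\theta)$ are iid $\unif(0,1)$, so the random vector $(\Pi_1(\theta), \ldots, \Pi_k(\theta))$ has the same joint distribution as $U^k$. Applying $\gamma_c$ then yields a real-valued random variable whose cumulative distribution function is precisely $F$. By the probability integral transform, $\Pi_c(\theta) = F(\gamma_c(\Pi_1(\theta), \ldots, \Pi_k(\theta)))$ is stochastically no smaller than $\unif(0,1)$, that is, $\mathbb{P}(\Pi_c(\theta) \leq \alpha) \leq \alpha$ for every $\alpha \in [0,1]$, with equality when $F$ is continuous. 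Note that the monotonicity of $\gamma_c$ plays no role in this step; it is needed only to give $\pi_c$ a sensible interpretation as a ranking, not for the validity claim.

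For the second step, since $F$ is a cumulative distribution function, $\pi_c$ takes values in $[0,1]$, and in particular $m := \max_{t \in \Theta} \pi_c(t) \in [0,1]$ (or one replaces $\max$ by $\sup$ if attainment is an issue). Therefore $\dot{\pi}_c(\vartheta) = \pi_c(\vartheta)/m \geq \pi_c(\vartheta)$ pointwise in $\vartheta$, and in particular at $\vartheta = \theta$. This gives the event inclusion $\{\dot{\Pi}_c(\theta) \leq \alpha\} \subseteq \{\Pi_c(\theta) \leq \alpha\}$, from which
\[
\mathbb{P}\bigl(\dot{\Pi}_c(\theta) \leq \alpha\bigr) \;\leq\; \mathbb{P}\bigl(\Pi_c(\theta) \leq \alpha\bigr) \;\leq\; \alpha,
\]
which is \eqref{eq:valdotpi}. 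The normalization property \eqref{eq:consonance} for $\dot{\pi}_c$ is automatic from the definition.

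There is really no substantial obstacle here: both steps are direct applications of the framework already laid out in Section~\ref{ss:Back} (the validification via PIT and the observation that inflating a valid contour preserves validity). The only delicate points are purely bookkeeping ones, namely that the PIT yields stochastic dominance rather than exact uniformity when $F$ has atoms (immaterial for the theorem's conclusion) and that one may need to replace $\max$ by $\sup$ if $\Theta$ is noncompact or $\pi_c$ fails to attain its supremum. Neither affects the inequality chain that delivers \eqref{eq:valdotpi}.
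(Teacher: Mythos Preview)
Your proposal is correct and follows essentially the same two-step argument as the paper's own proof: exact validity and independence make $\Pi_c(\theta)$ uniform via the probability integral transform, and normalization by $m\le 1$ only inflates, preserving the inequality. Your additional remarks on atoms in $F$, the $\max$/$\sup$ distinction, and the irrelevance of monotonicity to validity are accurate and slightly more careful than the paper, but the core logic is identical.
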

\begin{proof}
    Since $\pi_1, \ldots, \pi_k$ are independent and exact valid IMs' possibility contours, $\Pi_1(\theta), \ldots, \Pi_k(\theta)$ are iid $\unif(0,1)$ and $\Pi_c(\theta)\sim \unif(0,1)$. The normalization in \eqref{eq:normGeneral} leads to $\dot{\pi}_c(\vartheta) \geq \pi_c(\vartheta)$ for all $\vartheta \in \Theta$, thereby yielding \eqref{eq:valdotpi}.
\end{proof}

Two remarks are in order regarding the choice of $\gamma_c$. First, despite Theorem~1 stating that any monotone function can be used to fuse IMs, as all $k$ contours provide reliable information about $\theta$ I follow \citet{Dubois2001} and recommend conjunctive or statistical combinations. Second, 
from a computational standpoint, the normalization in \eqref{eq:normGeneral} would probably be non-trivial if $\theta$ is vector-valued. Consequently, it may be preferable to opt for a $\gamma_c$ whose validified form already constitutes a possibility contour.

As a final illustration, consider the five independent contours depicted in Figure~\ref{fig:general}(a), constructed for making inferences on some population mean $\theta$. Figure~\ref{fig:general}(b) displays three fused IMs: one based on $\gamma_{min}$ (red), another based on $\gamma_c$ obtained by multiplying the initial contours (blue), aligning with Fisher's suggestion for combining p-values \citep{Fisher32}, and a third that takes $\gamma_c$ as the average of the initial contours (green). The IM that takes into account the knowledge that data comes from a normal distribution and also assumes access to the original data is displayed in black. This is the best possible IM for inferences on $\theta$ \citep{SyringMartin19}. It is, of course, much more efficient than the three fused IMs considered. However, it's important to recall that the fused IMs lack knowledge about normality, original data or even the sample sizes, but are still valid solutions.


\begin{figure}[t]
\begin{center}
\subfigure[IMs' contours to be fused]{\scalebox{0.3}{\includegraphics{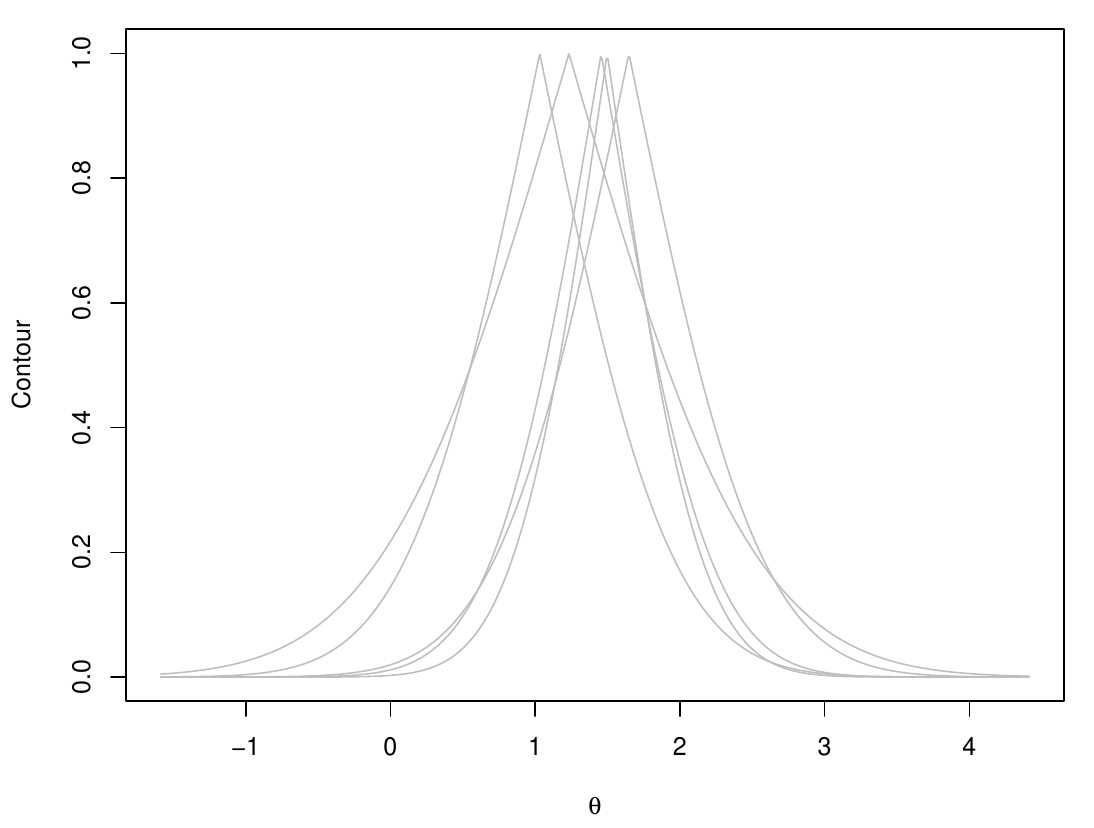}}}
\subfigure[Fused IMs and optimal IM]{\scalebox{0.3}{\includegraphics{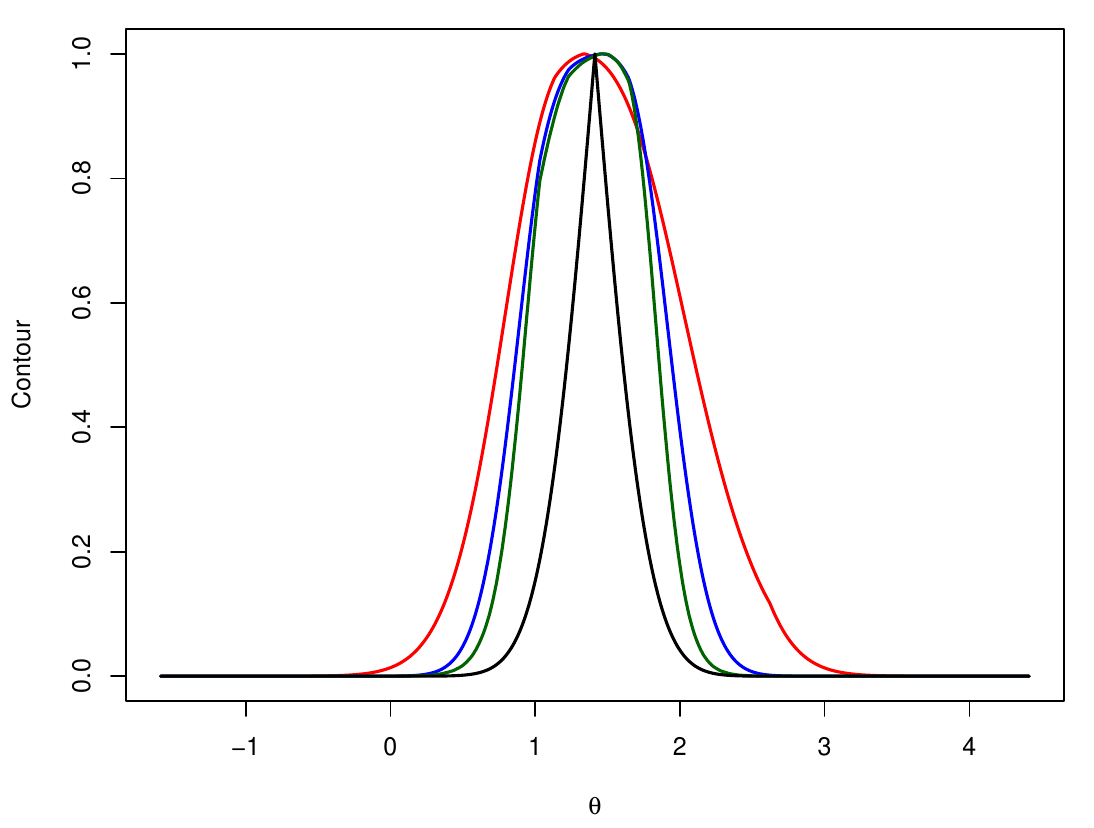}}}
\end{center}
\caption{Details for the example in Section~\ref{ss:general}. }
\label{fig:general}
\end{figure}

\section{Conclusion and remarks}\label{S:Conclusion}
This paper introduces a general framework for fusing independent IMs when no information regarding the construction of each individual IM is available. 
The underlying logic of this framework mirrors the core rationale of the IMs approach, making it akin to an application of IMs to problems where data consists of IMs' possibility contours themselves. This framework has close connections with methodologies to combine p-values in the statistics literature and methodologies to combine possibility measures in the imprecise probability literature. In summary, all three frameworks begin with a fusing function $\gamma_c$, but while p-values fusing rules simply validify
$\gamma_c$
  and contour fusing rules solely normalize 
$\gamma_c$, IMs fusing rules first validifies
$\gamma_c$ and then normalizes it.

This section concludes with several remarks and some future directions. First, even though the proposed framework does not require knowledge of how the individual IMs were constructed, it is not uncommon for information regarding the sample sizes to be available. There is a compelling rationale to integrate weights for the different contours in such instances, in the hope that the solution is more efficient. This will be explored elsewhere. Second, the normalization in \eqref{eq:normGeneral}, despite intuitive, is not the only option to guarantee the solution is a possibility measure. For alternatives and the reasoning behind them, see \citet{CELLA20221}.
Third, it has been suggested that the validification step should precede the normalization step.  But one may wonder if normalization could be conducted first. 
After all, this is the approach typically taken when selecting a function like the likelihood ratio in traditional IMs construction.  However, in the present context, if normalization is done first, one can no longer leverage the uniform nature of the individual contours to perform the validification step. Fourth, all the derivations above assumed that the IMs' contours to be combined are exact valid, but, in some applications, it is possible that some of them are just valid. Since these ``just valid'' contours are ``larger'' than necessary, when fusing the IMs' contours we obtain an inflated $\gamma_c$. Importantly, the validification step above still outputs a valid combination, albeit less efficient. Lastly,  
efficiency and optimality evaluations of different solutions, as well as the problem of fusing dependent IMs will be explored in future work.


\begin{credits}
\subsubsection{Acknowledgements}
The author thanks Professor Ryan Martin for his helpful feedback on an earlier version of this manuscript.

\subsubsection{\discintname}
The author has no competing interests to declare that are relevant to the content of this article. 
\end{credits}
%
%
%
\bibliographystyle{apalike}
\bibliography{literature.bib}

\end{document}